\numberwithin{equation}{section} 
\newcommand{\bea}{\begin{eqnarray}}
\newcommand{\eea}{\end{eqnarray}}
\newcommand{\ba}{\begin{array}}
\newcommand{\ea}{\end{array}}
\newcommand{\edc}{\end{document}}
\newcommand{\bc}{\begin{center}}
\newcommand{\ec}{\end{center}}
\newcommand{\be}{\begin{equation}}
\newcommand{\ee}{\end{equation}}
\newcommand{\dsf}{\displaystyle\frac}
\def\ca{{\mathcal A}}
\def\ce{{\mathcal E}}
\def\cf{{\mathcal F}}
\def\bc{{\mathbb C}}
\def\bn{{\mathbb N}}
\def\bq{{\mathbb Q}}
\def\br{{\mathbb R}}
\def\bz{{\mathbb Z}}
\def\a{\alpha}
\def\b{\beta}
  \def\G{\Gamma}
  \def\D{\Delta}
\def\l{\lambda}
\def\s{\sigma}
\def\v{\phi}
\def\u{{\mathbf{u}}}
\def\v{{\mathbf{v}}}
\def\xb{{\mathbf{x}}}
\def\fb{{\mathbf{f}}}
\def\yb{{\mathbf{y}}}
\newtheorem{thm}{Theorem}[section]
\newtheorem{lem}[thm]{Lemma}
\newtheorem{prop}[thm]{Proposition}
\theoremstyle{remark}
\newtheorem{rem}{Remark}[section]
\begin{document}
\title[Recurrence equations]
{on non-Archimedean recurrence equations and their applications}
\author{Farrukh Mukhamedov}
\address{Farrukh Mukhamedov\\
 Department of Computational \& Theoretical Sciences\\
Faculty of Science, International Islamic University Malaysia\\
P.O. Box, 141, 25710, Kuantan\\
Pahang, Malaysia} \email{{\tt far75m@yandex.ru} {\tt farrukh\_m@iium.edu.my}}
\author{Hasan Ak\i n}
\address{Hasan Ak\i n, Department of Mathematics, Faculty of Education,
 Zirve University, Kizilhisar Campus, Gaziantep, TR27260, Turkey}
\email{{\tt hasanakin69@gmail.com}}

\begin{abstract}
In the present paper we study stability of recurrence equations
(which in particular case contain a dynamics of rational functions)
generated by contractive functions defined on an arbitrary
non-Archimedean algebra. Moreover, multirecurrence equations are
considered. We also investigate reverse recurrence equations which
have application in the study of $p$-adic Gibbs measures. Note that
our results also provide the existence of unique solutions of
nonlinear functional equations as well.

\vskip 0.3cm \noindent {\it
Mathematics Subject Classification}: 46S10, 12J12, 39A70, 47H10, 60K35.\\
{\it Key words}: non-Archimedean algebra; recurrece equation; unique
solution; tree.
\end{abstract}

\maketitle

\section{introduction}

In this paper we deal with regulation properties of discrete
dynamical systems defined over non-archimedean algebars. Note that
the interest in such systems and in the ways in which they can be
applied has been rapidly increasing during the last couple of
decades (see, e.g., \cite{AKh,Sil}). An example of non-archimedean
algebras is a field of $p$-adic numbers (see \cite{Es} for more
examples). We stress that applications of $p$-adic numbers in
$p$-adic mathematical physics \cite{MP, V1, V2}, quantum mechanics
and many others \cite{AKS,DKKV,Kh2,VVZ} stimulated increasing
interest in the study of $p$-adic dynamical systems.

On the other hand, the study of $p$-adic dynamical systems arises in
Diophantine geometry in the constructions of canonical heights, used
for counting rational points on algebraic vertices over a number
field, as in \cite{CS}. In \cite{BM} dynamical systems (not only
monomial) over finite field extensions of the $p$-adic numbers were
considered. Other studies of non-Archimedean dynamics in the
neighborhood of a periodic and of the counting of periodic points
over global fields using local fields appeared in
\cite{Fan,HY,KhN,L,LP,QWYY}.  It is known that the analytic
functions play important roles in complex analysis. In the
non-Archimedean analysis the rational functions play a role similar
to that of analytic functions in complex analysis \cite{Es}.
Therefore, there naturally arises a question as regards the study
the dynamics of these functions in the mentioned setting. In
\cite{B1,RL} a general theory of $p$-adic rational dynamical systems
over complex $p$-adic filed $\bc_p$ has been developed. Certain
rational $p$-adic dynamical systems were investigated in
\cite{ARS,KM1,M,MR1}, which appear from problems of $p$-adic Gibbs
measures \cite{KM,Mq2,MR1,MR2}. In these investigations it is
important to know the regularity or stability of the trajectories of
rational dynamical systems.

In the present paper we are going to study stability of recurrence
equations (which in particular case contain a dynamics of rational
functions) generated by contractive functions defined on an
arbitrary non-Archimedean algebra. It is also considered and studied
multirecurrence equations. Note that in \cite{vdP} certian type of
$p$-adic difference equations has been studied. In section 4 we
investigate reverse recurrence equations which have application in
the study of $p$-adic Gibbs measures. In the last section 5 we
provide applications of the main results. Note that our results also
provide the existence of unique solutions of nonlinear functional
equations as well.

\section{Preliminaries}

Let $K$ be a field with a non-Archimedean norm $|\cdot|$, i.e. for
all $x,y\in K$ one has
\begin{itemize}
\item[1.] $|x|\geq 0$ and $|x|=0$ implies $x=0$;

\item[2.] $|xy|=|x||y|$;

\item[3.] $|x+y|\leq\max\{|x|,|y|\}$.
\end{itemize}

An example of such kind of field can be considered the $p$-adic
field $\bq_p$. Namely, for a fixed prime $p$, the set $\bq_p$ is
defined as a completion of the rational numbers $\bq$ with respect
to the norm $|\cdot|_p:\bq\to\br$ given by
\begin{eqnarray}
|x|_p=\left\{
\begin{array}{c}
  p^{-r} \ x\neq 0,\\
  0,\ \quad x=0,
\end{array}
\right.
\end{eqnarray}
here, $x=p^r\frac{m}{n}$ with $r,m\in\bz,$ $n\in\bn$,
$(m,p)=(n,p)=1$. A number $r$ is called \textit{a $p-$order} of $x$
and it is denoted by $ord_p(x)=r.$ The absolute value $|\cdot|_p$ is
non- Archimedean. There are also many examples of non-Archimedean
fields (see for example \cite{Ko}).

Now let $\ca$ be a non-Archimedean Banach algebra over $K$. This
means that the norm $\|\cdot\|$ of algebra satisfies the
non-Archimedean property, i.e. $\|x+y\|\leq\max\{\|x\|,\|y\|\}$ for
any $x,y\in \ca$. There are many examples of such kind of spaces
(see \cite{Es,Ro}).

Let us consider some basic examples of non-Archimedean Banach
algebras.

1. The set
$$
K^n=\{\xb=(x_1,\dots,x_n): \ x_k\in K,\ k=1,\dots,n\}
$$
with a norm $\|\xb\|=\max|x_k|$ and usual pointwise summation and
multiplication operations, is a non-Archimedean Banach algebra.

2. Let
$$
c_0=\{\xb=(x_n): \ x_n\in K,\ x_n\to 0\}.
$$
The defined set is endowed with usual pointwise summation and
multiplication operations. Put $\|\xb\|=\max|x_k|$, then $c_0$ is a
non-Archimedean Banach algebra.\\

In what follows, by $\ca$ we denote a non-Archimedean Banach
algebra.

There is a nice characterization of Cauchy sequence in
non-Archimedean spaces.

\begin{prop}\cite{Ko}\label{Ca}
 A sequence $\{x_n\}$ in  $\ca$ is a Cauchy sequence
 with respect to the norm $\|\cdot\|$ if and only if $\|x_{n+1}-x_{n}\|\to 0$
 as $n\to\infty$.
 \end{prop}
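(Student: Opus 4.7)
The plan is to handle the two implications separately, with the forward direction being essentially a tautology and the reverse direction relying crucially on the strong (ultrametric) triangle inequality that is built into the norm on $\ca$.

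For the forward direction, if $\{x_n\}$ is Cauchy with respect to $\|\cdot\|$, then specializing $m=n+1$ in the definition of a Cauchy sequence immediately yields $\|x_{n+1}-x_n\|\to 0$; nothing beyond the definition is required.

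For the reverse direction, I would fix $\ve>0$ and, invoking the hypothesis $\|x_{n+1}-x_n\|\to 0$, choose $N$ such that $\|x_{k+1}-x_k\|<\ve$ for every $k\geq N$. For any $m>n\geq N$, I would write the telescoping identity
\[
x_m-x_n=\sum_{k=n}^{m-1}(x_{k+1}-x_k)
\]
and then apply the non-Archimedean triangle inequality $\|a+b\|\leq\max\{\|a\|,\|b\|\}$ iteratively, which in its strong form gives
\[
\|x_m-x_n\|\leq \max_{n\leq k\leq m-1}\|x_{k+1}-x_k\|<\ve.
\]
Since $\ve$ was arbitrary, this shows $\{x_n\}$ is Cauchy.

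I do not anticipate any genuine obstacle. The only point needing a brief justification is the iterated use of the strong triangle inequality on a sum of $m-n$ terms, but this is an immediate induction on the number of summands. The real content of the statement is that, in the non-Archimedean setting, the ultrametric inequality upgrades the usual telescoping bound from a \emph{sum} of consecutive increments to a single \emph{maximum}, which is precisely what makes control of the one-step differences sufficient to guarantee the full Cauchy condition.
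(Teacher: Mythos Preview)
Your proof is correct and is the standard argument for this classical fact in non-Archimedean analysis. Note, however, that the paper does not actually supply a proof of this proposition: it is stated with a citation to Koblitz \cite{Ko} and used as a preliminary result, so there is no ``paper's own proof'' to compare against. Your argument is exactly the one found in standard references such as \cite{Ko} or \cite{S}.
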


Denote
\begin{eqnarray*}
&&B(a,r)=\{x\in \ca : \|x-a\|< r\}, \ \ \bar B(a,r)=\{x\in\ca :
\|x-a\|\leq r\},\\
&& S(a,r)=\{x\in \ca : \|x-a\|=r\},
\end{eqnarray*}
where $a\in\ca$, $r>0$.

In what follows, we will use the following

\begin{lem}[\cite{KMM}]\label{pr} Let $\{a_i\}_{i=1}^n,\{b_i\}_{i=1}^n\subset \ca$ such that
$\|a_i\|\leq 1$, $\|b_i\|\leq 1$, $i=1,\dots,n$, then
\begin{equation*}
\bigg\|\prod_{i=1}^{n}a_i-\prod_{i=1}^n b_i\bigg\|\leq \max_{i\leq
i\leq n}\{\|a_i-b_i\|\}
\end{equation*}
\end{lem}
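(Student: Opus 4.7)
The plan is to prove the inequality by induction on $n$, using a telescoping trick together with the non-Archimedean strong triangle inequality and the submultiplicativity of the Banach-algebra norm.

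For the base case $n=1$ the statement is a tautology. For the inductive step, assuming the claim for $n-1$, I would rewrite the difference of $n$-fold products by inserting a mixed term:
\begin{equation*}
\prod_{i=1}^{n}a_i-\prod_{i=1}^{n}b_i
=\Bigl(\prod_{i=1}^{n-1}a_i\Bigr)(a_n-b_n)
+\Bigl(\prod_{i=1}^{n-1}a_i-\prod_{i=1}^{n-1}b_i\Bigr)b_n.
\end{equation*}
Applying the non-Archimedean property $\|x+y\|\le\max\{\|x\|,\|y\|\}$ and then the submultiplicativity $\|xy\|\le\|x\|\|y\|$ to each summand gives
\begin{equation*}
\Bigl\|\prod_{i=1}^{n}a_i-\prod_{i=1}^{n}b_i\Bigr\|
\le\max\Bigl\{\Bigl\|\prod_{i=1}^{n-1}a_i\Bigr\|\,\|a_n-b_n\|,\ \Bigl\|\prod_{i=1}^{n-1}a_i-\prod_{i=1}^{n-1}b_i\Bigr\|\,\|b_n\|\Bigr\}.
\end{equation*}

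The hypotheses $\|a_i\|\le 1$ and $\|b_n\|\le 1$ (used via submultiplicativity to conclude $\|\prod_{i=1}^{n-1}a_i\|\le 1$) make both prefactors at most one, so the bound reduces to $\max\{\|a_n-b_n\|,\,\|\prod_{i=1}^{n-1}a_i-\prod_{i=1}^{n-1}b_i\|\}$. Plugging in the inductive hypothesis for the second term and merging the two maxima yields $\max_{1\le i\le n}\|a_i-b_i\|$, closing the induction.

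I do not expect a real obstacle here; the only subtlety worth flagging is the role of the normalization $\|a_i\|,\|b_i\|\le 1$, which is essential to discard the otherwise arising factors of $\|\prod a_i\|$ and $\|b_n\|$. Without it one would only get the weaker bound $C\max_i\|a_i-b_i\|$ with $C=\max\{\prod\|a_i\|,\prod\|b_i\|\}$, which is exactly why the lemma is stated in the unit-ball form — a form perfectly suited to the intended application to contractive maps in later sections.
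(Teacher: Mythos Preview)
Your argument is correct: the telescoping identity combined with the strong triangle inequality and submultiplicativity, followed by the unit-ball bound, is exactly the standard way to prove this, and your remark on the necessity of $\|a_i\|,\|b_i\|\le 1$ is accurate. Note, however, that the paper does not actually supply a proof of this lemma --- it is quoted from \cite{KMM} --- so there is nothing in the present text to compare your proposal against; your proof is precisely the one that reference would contain.
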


Note that the basics of non-Archimedean analysis are explained in
\cite{S,R}.

\section{A recurrence equations}

Let $\ca$ be a non-Archimedean Banach algebra and assume that
$C\subset \bar B(0,1)$ be a closed set. A mapping $f:C^m\to C$ is
called \textit{contractive}, if there is a constant $\a_f\in [0,1)$
such that
\begin{eqnarray}\label{C}
\|f(\xb)-f(\yb)\|\leq \a_f\max_{i\leq k\leq m}\|x_k-y_k\| \ \
\textrm{for all}\ \  \xb=(x_i),\yb=(y_i)\in C^m.
\end{eqnarray}

Note that if the function $f$ does not depend on some variable
$x_k$, then such a variable will be absent in the right hand side of
\eqref{C}.

Now assume that we are given several collections
$\{f_{i}^{(k)}\}_{i=1}^N$, $k=1,\dots,M$ of contractive mappings
defined on $C^m$. Let $(\ell^{(k)}_1,\dots,\ell^{(k)}_N)$ such that
$\ell^{(k)}_1=0$, $2\leq \ell^{(k)}_i-\ell^{(k)}_{i-1}\leq m-1$,
$i=2,\dots,N$, $k=1,\dots,M$.

Denote $L:=\max\{\ell^{(k)}_m: \ 1\leq k\leq M\}+m$. Take any
initial points $\{x_1,\dots,x_{L}\}\subset C$, and consider the
following sequence $\{x_n\}$ defined by the recurrence relations:
\begin{eqnarray}\label{R1}
x_{n+L}=\sum_{k=1}^M \prod_{i=1}^N
f_{i}^{(k)}(x_{n+\ell^{(k)}_i},\dots,x_{n+\ell^{(k)}_i+m-1}), \ \ \
n\in\bn.
\end{eqnarray}

\begin{lem}\label{Rl1}
Let $\{f_{i}^{(k)}\}_{i=1}^N$, $k=1,\dots,M$ be collections of
contractive mappings defined on $C^m$ (where $C\subset \bar
B(0,1)$). Then for any initial points $\{x_1,\dots,x_{m+L}\}\subset
C$ the sequence $\{x_n\}$ defined by \eqref{R1} is convergent.
\end{lem}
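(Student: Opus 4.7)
The plan is to show $\|x_{n+1}-x_n\|\to 0$ and invoke Proposition \ref{Ca}. Write $d_n:=\|x_{n+1}-x_n\|$ for brevity; since every $x_n\in C\subset\bar B(0,1)$, the non-Archimedean inequality gives $d_n\leq 1$ for all $n$.

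The first step is to obtain a single-step comparison of consecutive iterates. Subtract the recursion \eqref{R1} at $n+1$ from the one at $n$: because $\|\cdot\|$ is non-Archimedean, the sum over $k$ is dominated by the maximum over $k$, so
\[
d_{n+L}\leq \max_{1\leq k\leq M}\left\|\prod_{i=1}^{N}f_i^{(k)}\big(x_{n+1+\ell_i^{(k)}},\dots,x_{n+\ell_i^{(k)}+m}\big)-\prod_{i=1}^{N}f_i^{(k)}\big(x_{n+\ell_i^{(k)}},\dots,x_{n+\ell_i^{(k)}+m-1}\big)\right\|.
\]
Since each $f_i^{(k)}$ takes values in $C\subset\bar B(0,1)$, its outputs have norm at most one, so Lemma \ref{pr} applies and replaces the difference of products by the maximal difference of corresponding factors. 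Then the contractivity \eqref{C} converts each such factor difference into $\alpha_{f_i^{(k)}}\cdot\max$ of the $d_j$ for $j$ running through the shifted arguments.

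Setting $\alpha:=\max_{k,i}\alpha_{f_i^{(k)}}\in[0,1)$ and noting that by the definition $L=\max_k\ell_N^{(k)}+m$ every index $\ell_i^{(k)}+j$ (with $0\leq j\leq m-1$) lies in $\{0,1,\dots,L-1\}$, I obtain the master inequality
\[
d_{n+L}\leq \alpha\max_{n\leq j\leq n+L-1}d_j,\qquad n\geq 1.
\]

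The final step converts this into geometric decay. Let $A_n:=\sup_{j\geq n}d_j$, which is finite (bounded by $1$) and nonincreasing. Taking the supremum over $n'\geq n$ of the master inequality gives $A_{n+L}\leq \alpha A_n$, so by iteration $A_{1+kL}\leq \alpha^k A_1\to 0$; monotonicity then forces $A_n\to 0$ and hence $d_n\to 0$. Proposition \ref{Ca} yields convergence of $\{x_n\}$.

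The only delicate point I expect is bookkeeping the index window: one must verify that the arguments on the right-hand side of \eqref{R1} (shifted by one) together span exactly the block $[n,n+L-1]$, so that the induced bound \emph{closes} on itself with contraction factor $\alpha<1$; this is precisely why $L$ is defined as $\max_k\ell_N^{(k)}+m$. The passage from the windowed bound to uniform decay via the nonincreasing sequence $A_n$ is the cleanest way to avoid tracking individual $d_j$'s.
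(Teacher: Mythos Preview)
Your argument is correct and follows the same line as the paper's own proof: reduce to $\|x_{n+1}-x_n\|\to 0$ via Proposition~\ref{Ca}, bound the difference of sums by the maximum, apply Lemma~\ref{pr} to the products, and then use contractivity to get a factor $\alpha<1$ against a window of previous differences. The only distinction is cosmetic: the paper iterates the windowed inequality informally (writing ``$\cdots\leq\alpha^{n-L}$''), whereas your supremum $A_n:=\sup_{j\geq n}d_j$ makes that iteration clean and rigorous.
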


\begin{proof} To prove the lemma it is enough to show that $\{x_n\}$ is a
Cauchy sequence. Due to Proposition \ref{Ca} we need to establish
$\|x_{n+1}-x_{n}\|\to 0$ as $n\to\infty$. Let us first denote
$$
\a=\max_{k,i}\a_{f_{i}^{(k)}}.
$$
From the contractivity of the functions $f_{i}^{(k)}$ we conclude
that $0<\a<1$.

Now from \eqref{R1}, $\|f_{i}^{(k)}(\xb)\|\leq 1$, $\xb\in C^m$ and
using Lemma \ref{pr} one finds
\begin{eqnarray*}
\|x_{n+L+1}-x_{n+L}\|&\leq& \max_{1\leq k\leq M}\bigg\|
\prod_{i=1}^N
f_{i}^{(k)}(x_{n+1+\ell^{(k)}_i},\dots,x_{n+1+\ell^{(k)}_i+m-1})-\\[2mm]
&&\qquad \ \  \prod_{i=1}^N
f_{i}^{(k)}(x_{n+\ell^{(k)}_i},\dots,x_{n+\ell^{(k)}_i+m-1})\bigg\|\nonumber\\[3mm]
&\leq&\max_{1\leq k\leq M \atop 1\leq i\leq N}
\bigg\|f_{i}^{(k)}(x_{n+1+\ell^{(k)}_i},\dots,x_{n+1+\ell^{(k)}_i+m-1})-\\[2mm]
&&\qquad \ \
f_{i}^{(k)}(x_{n+\ell^{(k)}_i},\dots,x_{n+\ell^{(k)}_i+m-1})\bigg\|\nonumber\\[3mm]
&\leq&\a\bigg(\max_{1\leq i\leq
N}\big\|x_{n+1+i+\ell^{(k)}_i}-x_{n+i+\ell^{(k)}_i}\big\|\bigg)\nonumber\\[3mm]
&&\cdots\cdots\cdots\\[2mm]
&\leq&\a^{n-L}.
\end{eqnarray*}
The last inequality yields that $\|x_{n+1}-x_{n}\|\to 0$ as
$n\to\infty$. Due to closedness of $C$ we conclude that the limiting
element belongs to $C$. This completes the proof.
\end{proof}

\begin{lem}\label{Rl2}
Let $\{f_{i}^{(k)}\}_{i=1}^N$, $k=1,\dots,M$ be collections of
contractive functions defined on $C^m$ (where $C\subset \bar
B(0,1)$).  Take any two colloctions of initial points, i.e.
$\{x_1,\dots,x_{m+L}\}\subset C$ and $\{y_1,\dots,y_{m+L}\}\subset
C$. Then for the corresponding sequences $\{x_n\}$ and $\{y_n\}$,
defined by \eqref{R1}, one has $\|x_n-y_n\|\to 0$ as $n\to\infty$.
\end{lem}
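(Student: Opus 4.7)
The plan is to mirror the contraction argument from Lemma \ref{Rl1}, now applied to the difference of the two sequences. Setting $D_n:=\|x_n-y_n\|$ and $\alpha:=\max_{k,i}\alpha_{f_i^{(k)}}\in[0,1)$, the non-Archimedean estimate for a finite sum together with Lemma \ref{pr} (applicable because $f_i^{(k)}(C^m)\subset C\subset\bar B(0,1)$, so every factor in the product has norm $\le 1$) and the contractivity assumption \eqref{C} will give, directly from the recurrence \eqref{R1}, an inequality of the form
\begin{equation*}
D_{n+L}\le \alpha\max_{0\le q\le L-1}D_{n+q},
\end{equation*}
where the window $[0,L-1]$ is exactly the admissible range of the shifted indices $\ell_i^{(k)}+s$ ($1\le i\le N$, $0\le s\le m-1$): the minimum is $0$ since $\ell_1^{(k)}=0$, and the maximum is at most $L-1$ by the definition of $L$.

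Next I would introduce the sliding maximum $R_n:=\max_{n\le j\le n+L-1}D_j$. Two properties are to be established: (i) monotonicity $R_{n+1}\le R_n$, which is immediate because the only new term $D_{n+L}$ is already bounded by $\alpha R_n\le R_n$; and (ii) the block-contraction $R_{n+L}\le\alpha R_n$. Property (ii) is the crux and will be proved by a short strong induction on $j=0,1,\dots,L-1$ showing that $D_{n+L+j}\le\alpha R_n$. At the inductive step the indices appearing in the displayed bound for $D_{n+L+j}$ split into those still lying in $[n,n+L-1]$ (bounded by $R_n$ via the definition) and those of the form $n+L+i$ with $i<j$ (bounded by $\alpha R_n\le R_n$ via the inductive hypothesis); hence the inner maximum is $\le R_n$ and contraction by $\alpha$ yields the claim.

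Iterating (ii) gives $R_{1+kL}\le\alpha^k R_1\to 0$ as $k\to\infty$, and the monotonicity (i) then forces the whole sequence $R_n$ to $0$. Since $D_n\le R_n$ by definition, we obtain $\|x_n-y_n\|\to 0$, which is the desired conclusion.

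The main technical obstacle I anticipate is precisely that the recurrence \eqref{R1} for $x_{n+L}$ draws on an entire block of past terms rather than a single predecessor, so one cannot simply iterate $D_{n+L}\le\alpha D_{\text{previous}}$ to extract geometric decay in one stroke. The sliding-window quantity $R_n$ is the device that converts the per-step estimate into genuine geometric decay on the block scale $L$; once the bound $R_{n+L}\le\alpha R_n$ is in place the remainder of the argument reduces to a standard Banach-type contraction iteration.
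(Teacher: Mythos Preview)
Your proposal is correct and follows the same skeleton as the paper's proof: first obtain the one-step bound
\[
\|x_{n+L}-y_{n+L}\|\le \alpha\max_{0\le q\le L-1}\|x_{n+q}-y_{n+q}\|
\]
via the strong triangle inequality, Lemma~\ref{pr}, and the contractivity \eqref{C}, then iterate. The paper compresses the iteration into a bare ``$\cdots\cdots\cdots$'' and asserts $\|x_{n+L}-y_{n+L}\|\le\alpha^{n-L}$ directly, using implicitly that all $D_j\le 1$ since $C\subset\bar B(0,1)$. Your sliding-window maximum $R_n$ is exactly the device that makes that ellipsis rigorous: it converts the block-dependent one-step estimate into the clean contraction $R_{n+L}\le\alpha R_n$, from which the conclusion is immediate. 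So the two proofs are the same in spirit; yours supplies the bookkeeping that the paper leaves to the reader.
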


\begin{proof}
From \eqref{R1}, $\|f_{i}^{(k)}(x)\|\leq 1$, $\xb\in C^m$ and using
Lemma \ref{pr} one finds
\begin{eqnarray*}
\|x_{n+L}-y_{n+L}\|&\leq&\max_{1\leq k\leq M}\bigg\| \prod_{i=1}^N
f_{i}^{(k)}(x_{n+\ell^{(k)}_i},\dots,x_{n+\ell^{(k)}_i+m-1})-\\[2mm]
&&\qquad \ \  \prod_{i=1}^N
f_{i}^{(k)}(y_{n+\ell^{(k)}_i},\dots,y_{n+\ell^{(k)}_i+m-1})\bigg\|\nonumber\\[3mm]
&\leq&\max_{1\leq k\leq M \atop 1\leq i\leq N}
\bigg\|f_{i}^{(k)}(x_{n+\ell^{(k)}_i},\dots,x_{n+\ell^{(k)}_i+m-1})-\\[2mm]
&&\qquad \ \
f_{i}^{(k)}(y_{n+\ell^{(k)}_i},\dots,y_{n+\ell^{(k)}_i+m-1})\bigg\|\nonumber\\[3mm]
&\leq&\a\bigg(\max_{1\leq i\leq
N}\big\|x_{n+i+\ell^{(k)}_i}-y_{n+i+\ell^{(k)}_i}\big\|\bigg)\nonumber\\[3mm]
&&\cdots\cdots\cdots\\[2mm]
&\leq&\a^{n-L}.
\end{eqnarray*}
The last inequality implies that $\|x_{n}-y_{n}\|\to 0$ as
$n\to\infty$. The proof is complete.
\end{proof}

From these lemmas we infer the following

\begin{thm}\label{Rt1}
Let $\{f_{i}^{(k)}\}_{i=1}^N$, $k=1,\dots,M$ be collections of
contractive functions defined on $C^m$ (where $C\subset \bar
B(0,1)$).  Then there is $x_*\in C$ such that for any initial points
$\{x_1,\dots,x_L\}\subset C$ the sequence $\{x_n\}$ defined by
\eqref{R1} converges to $x_*$. Moreover, one has
$$
\|x_{n+L}-x_*\|\leq \a^{n} \ \ \ \textrm{for all} \ \ n\in\bn,
$$
where
$$
\a=\max_{k,i}\a_{f_{i}^{(k)}}.
$$
\end{thm}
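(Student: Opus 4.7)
The plan is to deduce the theorem by combining the two preceding lemmas, using Lemma~\ref{Rl1} for existence of the limit of each individual trajectory, Lemma~\ref{Rl2} for the universality of the limit, and a careful reopening of the contraction estimate inside the proof of Lemma~\ref{Rl1} to pin down the quantitative rate.

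\emph{Universal limit.} First I would fix one distinguished initial segment in $C$ and apply Lemma~\ref{Rl1} to produce a limit point $x_*\in C$ for the corresponding sequence (the limit lies in $C$ because $C$ is closed). For an arbitrary initial segment $\{x_1,\dots,x_L\}\subset C$, Lemma~\ref{Rl1} again yields some limit $y_*\in C$, and Lemma~\ref{Rl2} applied to the two sequences forces $\|x_n-y_n\|\to 0$, hence $y_*=x_*$. Thus every trajectory generated by~\eqref{R1} converges to the same element $x_*$.

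\emph{Rate of convergence.} For the bound $\|x_{n+L}-x_*\|\le \a^n$ I would revisit the iteration performed inside the proof of Lemma~\ref{Rl1}. Starting from
\[
\|x_{q+L+1}-x_{q+L}\|\le \a\,\max_{j}\|x_{q+1+j}-x_{q+j}\|,
\]
which is exactly what~\eqref{R1} together with Lemma~\ref{pr} provides, and iterating this downward until the initial differences $\|x_p-x_{p-1}\|\le 1$ (valid because $C\subset\bar B(0,1)$) are hit, one obtains the tightened step-size bound $\|x_{n+L+1}-x_{n+L}\|\le \a^n$ for every $n\in\bn$. The strong triangle inequality then delivers, for each $p\ge 0$,
\[
\|x_{n+L}-x_{n+L+p}\|\le \max_{0\le j<p}\|x_{n+L+j}-x_{n+L+j+1}\|\le \a^n,
\]
and letting $p\to\infty$ with the continuity of the norm gives the desired $\|x_{n+L}-x_*\|\le \a^n$.

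\emph{Main obstacle.} The only genuine issue is the exponent bookkeeping in the step-size estimate: the proof of Lemma~\ref{Rl1} was content with the looser bound $\a^{n-L}$, whereas the theorem asks for $\a^n$. The care needed is simply to feed the recursion with the trivial ultrametric bound $\|x_p-x_{p-1}\|\le 1$ at the very bottom and to apply the contraction factor $\a$ exactly $n$ times, rather than stopping the iteration at the reference index $L$. Once this clean step-size bound is in hand, the passage from consecutive differences to the distance to the limit is essentially free thanks to the non-Archimedean property, which is the one place in the argument where the ultrametric setting does real work.
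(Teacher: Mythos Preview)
Your overall strategy---existence of a limit via Lemma~\ref{Rl1}, independence from initial data via Lemma~\ref{Rl2}, and then a reopening of the contraction estimate for the quantitative rate---is exactly what the paper does; indeed the paper gives no argument beyond the line ``From these lemmas we infer the following'', so on the qualitative part your proposal and the paper coincide.

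Your sharpening of the rate, however, has a genuine gap. Writing $D_q=\|x_{q+1}-x_q\|$, the one-step bound furnished by \eqref{R1} together with Lemma~\ref{pr} is
\[
D_{n+L}\ \le\ \a\,\max_{n\le j\le n+L-1} D_j,
\]
so the maximum on the right ranges over a full window of length $L$, not merely over the immediately preceding index. For $1\le n\le L$ that window still contains some of the initial differences $D_1,\dots,D_L$, which the ultrametric bounds only by $1$; consequently $D_{L+1},\dots,D_{2L}\le\a$, then $D_{2L+1},\dots,D_{3L}\le\a^2$, and in general only $D_{n+L}\le\a^{\lceil n/L\rceil}$. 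You cannot collect $n$ factors of $\a$ by ``iterating downward $n$ times'': at each stage the maximum may be attained at the lowest index of the window, which short-circuits the descent after a single step. The same counting persists if one compares directly with the fixed point, since $E_q:=\|x_q-x_*\|$ obeys the identical recursion $E_{n+L}\le\a\max_{n\le j\le n+L-1}E_j$, giving $E_{n+L}\le\a^{\lceil n/L\rceil}$ rather than $\a^{n}$. In short, the exponent $n$ in the displayed rate is not recoverable from the argument you outline (nor, for that matter, from the estimate $\a^{n-L}$ recorded inside the proof of Lemma~\ref{Rl1}); what the lemmas actually deliver is geometric decay with ratio $\a^{1/L}$.
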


\begin{rem} From the last theorem we infer that the sequence \eqref{R1} defines
a unique solution (belonging to the set $C$) of the equation
\begin{eqnarray}\label{eR}
x=\sum_{k=1}^M \prod_{i=1}^Nf_{i}^{(k)}(x,\dots,x).
\end{eqnarray}
\end{rem}

\begin{rem} If $f(\xb)$ a contractive mapping on $C^m$, then Theorem \ref{Rt1} yields
that for any $N>1$ the equation
\begin{eqnarray*}\label{eR}
x=\big(f(x,\dots,x)\big)^N.
\end{eqnarray*}
has a unique solution belonging to $C$. More conctere examples will
be given in the final section.
\end{rem}

Mow let us consider multisequence case.

As before $\ca$ denotes a non-Archimedean Banach algebra and assume
that $C\subset \bar B(0,1)$ be a closed set. Suppose that we are
given several collections $\{F_{1}^{(k)},F_{2}^{(k)}\}_{k=1}^{N_1}$,
$\{G_{1}^{(k)},G_{2}^{(k)}\}_{k=1}^{N_2}$,
$\{H_{1}^{(k)},H_{2}^{(k)}\}_{k=1}^{N_3}$ of contractive mappings
defined on $C^2$.

 Take any initial points
$\{x_1,y_1,z_1\}\subset C$, and consider the following sequences
$\{x_n\}$, $\{y_n\}$, $\{z_n\}$ defined by the recurrence relations:
\begin{eqnarray}\label{nR1}
\left\{ \begin{array}{lll} && x_{n+1}=\sum_{k=1}^{N_1}
F_{1}^{(k)}(x_{n},y_{n})F_{2}^{(k)}(y_{n},z_{n})\\[2mm]
&& y_{n+1}=\sum_{k=1}^{N_2}
G_{1}^{(k)}(x_{n+1},y_{n})G_{2}^{(k)}(y_{n},z_{n})\\[2mm]&&
z_{n+1}=\sum_{k=1}^{N_3}
H_{1}^{(k)}(x_{n+1},y_{n+1})H_{2}^{(k)}(y_{n+1},z_{n})\\[2mm]
\end{array}
\right.
\end{eqnarray}

\begin{thm}\label{nRt1}
Let $\{F_{1}^{(k)},F_{2}^{(k)}\}_{k=1}^{N_1}$,
$\{G_{1}^{(k)},G_{2}^{(k)}\}_{k=1}^{N_2}$,
$\{H_{1}^{(k)},H_{2}^{(k)}\}_{k=1}^{N_3}$  be collections of
contractive mappings defined on $C^2$ (where $C\subset \bar
B(0,1)$). Then for any initial points $\{x_1,y_1,z_1\}\subset C$ the
sequences $\{x_n\}$, $\{y_n\}$, $\{z_n\}$ defined by \eqref{nR1} are
convergent. Moreover, the limit does not depend on initial
condtions.
\end{thm}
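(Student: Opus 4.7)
The plan is to mimic the two-step strategy used for Theorem \ref{Rt1}: first establish that each of the three sequences is Cauchy by pushing all consecutive differences into a single ``bundled'' modulus that decays geometrically, then rerun the same estimate comparing two solutions to conclude independence of the initial data. The key technical tool is again Lemma \ref{pr}, applied to products of two contractive factors in each line of \eqref{nR1}. The only new feature compared to Theorem \ref{Rt1} is the staggered dependence structure: $y_{n+1}$ already uses the just-updated $x_{n+1}$, and $z_{n+1}$ uses both $x_{n+1}$ and $y_{n+1}$.

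First I would set
$$
\a:=\max\bigg\{\max_{i,k}\a_{F_i^{(k)}},\ \max_{i,k}\a_{G_i^{(k)}},\ \max_{i,k}\a_{H_i^{(k)}}\bigg\}\in[0,1),
$$
and introduce the bundled modulus
$$
M_n:=\max\{\|x_{n+1}-x_n\|,\ \|y_{n+1}-y_n\|,\ \|z_{n+1}-z_n\|\}.
$$
Using the ultrametric inequality on the sum in the first line of \eqref{nR1}, Lemma \ref{pr} on each two-factor product, and the contractivity of each $F_i^{(k)}$, I would obtain $\|x_{n+1}-x_n\|\leq\a M_{n-1}$. For the second line, the analogous computation yields
$$
\|y_{n+1}-y_n\|\leq\a\max\{\|x_{n+1}-x_n\|,\ \|y_n-y_{n-1}\|,\ \|z_n-z_{n-1}\|\},
$$
and since the first argument is already bounded by $\a M_{n-1}\leq M_{n-1}$, the whole right-hand side is $\leq\a M_{n-1}$. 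The same reasoning applied to the third line produces $\|z_{n+1}-z_n\|\leq\a M_{n-1}$. Hence $M_n\leq\a M_{n-1}$, so $M_n\leq\a^{n-1}M_1\to 0$, and Proposition \ref{Ca} gives the convergence of each of $\{x_n\},\{y_n\},\{z_n\}$, with limits inside $C$ by closedness.

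For independence of the initial data I would fix a second triple $\{x_1',y_1',z_1'\}\subset C$ producing sequences $\{x_n'\},\{y_n'\},\{z_n'\}$ and introduce $D_n:=\max\{\|x_n-x_n'\|,\|y_n-y_n'\|,\|z_n-z_n'\|\}$. Running the three estimates in the ``two-sequence'' form used in Lemma \ref{Rl2} would yield $D_{n+1}\leq\a D_n$, hence $D_n\leq\a^{n-1}D_1\to 0$, and the two limits must coincide.

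The main obstacle I anticipate is the bookkeeping induced by the staggered substitutions $x_{n+1}$ and $y_{n+1}$ in the $y$- and $z$-recursions. The resolution is the observation above: a freshly updated difference such as $\|x_{n+1}-x_n\|$ is already dominated by $\a M_{n-1}$, so inserting it into the non-Archimedean maximum alongside the ``old'' differences never worsens the inductive bound. Everything else (closedness of $C$, completeness of $\ca$, the inner two-factor estimate via Lemma \ref{pr}) runs parallel to the proofs of Lemmas \ref{Rl1} and \ref{Rl2}.
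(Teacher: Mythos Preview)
Your proposal is correct and follows essentially the same route as the paper's proof: the paper also bundles the three consecutive differences into a single quantity $d_n$ (your $M_n$), derives the three inequalities \eqref{nRt11}--\eqref{nRt13} via Lemma \ref{pr}, combines them into $d_{n+1}\leq\a d_n$, and then refers to the argument of Lemma \ref{Rl2} for independence of the initial data. In fact your treatment of the staggered dependence (that the freshly updated $\|x_{n+1}-x_n\|$ is already $\leq\a M_{n-1}\leq M_{n-1}$) makes explicit a step the paper leaves implicit when passing from \eqref{nRt11}--\eqref{nRt13} to \eqref{nRt14}.
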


\begin{proof} First we prove that each sequence is a Cauchy sequence.
Let us denote
\begin{eqnarray*}
&&
d_n=\max\{\|x_{n+1}-x_{n}\|,\|y_{n+1}-y_{n}\|,\|z_{n+1}-z_{n}\|\}, \
\\[2mm]
&&\a=\max_{k,i}\{\a_{F_{i}^{(k)}},\a_{G_{i}^{(k)}},\a_{H_{i}^{(k)}}\}.
\end{eqnarray*}

Due to condition we have that $0<\a<1$.

Now from \eqref{nR1}, $\|f_{i}^{(k)}(\xb)\|\leq 1$ and using Lemma
\ref{pr} one finds
\begin{eqnarray}\label{nRt11}
\|x_{n+1}-x_{n}\|&\leq& \max_{1\leq k\leq N_1}\bigg\|
F_{1}^{(k)}(x_{n},y_{n})F_{2}^{(k)}(y_{n},z_{n})-F_{1}^{(k)}(x_{n-1},y_{n-1})F_{2}^{(k)}(y_{n-1},
z_{n-1})\bigg\|\nonumber \\[2mm]
&\leq & \max_{1\leq k\leq N_1}\max\{\big\|
F_{1}^{(k)}(x_{n},y_{n})-F_{1}^{(k)}(x_{n-1},y_{n-1})\big\|,
\big\|F_{2}^{(k)}(y_{n},z_{n})-F_{2}^{(k)}(y_{n-1}, z_{n-1})\big\|\}
\nonumber\\[3mm]
&\leq&\a\max\{\|x_{n}-x_{n-1}\|,\|y_{n}-y_{n-1}\|,\|z_{n}-z_{n-1}\|\}.
\end{eqnarray}
Using the same argument we obtain
\begin{eqnarray}\label{nRt12}
&&\|y_{n+1}-y_{n}\|\leq
\a\max\{\|x_{n+1}-x_{n}\|,\|y_{n}-y_{n-1}\|,\|z_{n}-z_{n-1}\|\},\\[2mm]
\label{nRt13} &&\|z_{n+1}-z_{n}\|\leq
\a\max\{\|x_{n+1}-x_{n}\|,\|y_{n+1}-y_{n}\|,\|z_{n}-z_{n-1}\|\}.
\end{eqnarray}

Hence from \eqref{nRt11}-\eqref{nRt13} one finds
\begin{eqnarray}\label{nRt14}
d_{n+1}\leq \a d_n
\end{eqnarray}
for all $n\in\bn$. This means that $d_n\to 0$ as $n\to\infty$. Due
to Proposition \ref{Ca} the sequences are Cauchy. The closedness of
$C$ yields that the limiting elements belongs to $C$, i.e. $x_n\to
x_*$, $y_n\to y_*$, $z_n\to z_*$, where $x_*, y_*,z_*\in C$.

The uniqueness of the limiting elements can by proved by the same
argument as the proof of Lemma \ref{Rl2}. This completes the proof.
\end{proof}

\begin{rem} From Theorem \ref{nRt1} we conclude that the sequences \eqref{nR1} define
a unique solution (belonging to the set $C$) of the system of
equations
\begin{eqnarray}\label{nR2}
\left\{ \begin{array}{lll} && x=\sum_{k=1}^{N_1}
F_{1}^{(k)}(x,y)F_{2}^{(k)}(y,z)\\[2mm]
&& y=\sum_{k=1}^{N_2}
G_{1}^{(k)}(x,y)G_{2}^{(k)}(y,z)\\[2mm]&&
z=\sum_{k=1}^{N_3}
H_{1}^{(k)}(x,y)H_{2}^{(k)}(y,z)\\[2mm]
\end{array}
\right.
\end{eqnarray}
Note that a'priori the existence of the solution of \eqref{nR2} is
not obvious. Moreover, the proved Theorem \ref{nR1} allows to find
solutions of functional equations, when one takes instead of $\ca$
the algebra of analytic functions. In \cite{EOY} polynomial
functional equations have been investigated over $p$-adic analytic
functions.
\end{rem}

\begin{rem}
We stress that by modifying \eqref{nR1} for arbirary number of
sequmnces, similar kind of results can be proved by means of the
same technuque as in the proof of Theorem \ref{nRt1}.
\end{rem}

\section{A reverse recurrence equations}

In this section we consider a reverse recurrence relations to
\eqref{R1}. To define it, we need some prelimenary notions about a
$k$-ary trees.

Let $(V,L)$ be a graph, here $V$ is the set of vertices and $L$ is
the set of edges.  A pair $G_k=(V,L)$ is called \textit{ $k$-ary
tree} if it has a root $x^0$ in which each vertex has no more than
$k$ edges. If in a $k$-ary tree each vertex has exactly $k$ edges,
then such a tree is called \textit{Cayley tree}. The vertices $x$
and $y$ are called {\it nearest neighbors} and they are denoted by
$l=<x,y>$ if there exists an edge connecting them. A collection of
the pairs $<x,x_1>,\dots,<x_{d-1},y>$ is called a {\it path} from
the point $x$ to the point $y$. The distance $d(x,y), x,y\in V$, on
the tree, is the length of the shortest path from $x$ to $y$.

Recall a coordinate structure in $G_k$:  every vertex $x$ (except
for $x^0$) of $\G_k$ has coordinates $(i_1,\dots,i_n)$, here
$i_m\in\{1,\dots,k\}$, $1\leq m\leq n$ and for the vertex $x^0$ we
put $(0)$.  Namely, the symbol $(0)$ constitutes level 0, and the
sites $(i_1,\dots,i_n)$ form level $n$ ( i.e. $d(x^0,x)=n$) of the
lattice.


For $x\in G_k$, $x=(i_1,\dots,i_n)$ denote
\begin{equation}\label{S(x)}
 S(x)=\{(x,i):\ 1\leq
i\leq k_x\},
\end{equation}
here $(x,i)$ means that $(i_1,\dots,i_n,i)$. This set is called a
set of {\it direct successors} of $x$.

Let $\ca$ be as usual a non-Archimedean Banach algebra and $C\subset
\bar B(0,1)$. Assume that we are given a family
$\{f^{(i)}_{x,y}\}_{i=1}^M$, $<x,y>\in L$ of contractive mappinga
such that for each $<x,y>$ the function $f^{(i)}_{x,y}$ maps
$C^{k_x}$ to $C$. Now consider a function $\u: V\to C$, i.e.
$\u=((u_{x})_{x\in G_k}$ such that
\begin{eqnarray}\label{uR1}
u_{x}=\sum_{i=1}^M \prod_{y\in S(x)}
f^{(i)}_{xy}(u_{(x,1)},\dots,u_{(x,k_x)}).
\end{eqnarray}

We are interested how many functions $\u$ satisfy the equation
\eqref{uR1}.

Denote
$$
\b=\max_{1\leq i\leq M \atop <x,y>\in L}\a_{f^{(i)}_{x,y}}.
$$

\begin{thm}\label{Rt2}
Let $\{f^{(i)}_{x,y}\}_{i=1}^M$, $<x,y>\in L$ be a family of
contractive functions such that $\b<1$. Then a solution of the
equation \eqref{uR1} is not more than one.
\end{thm}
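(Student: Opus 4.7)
The plan is to argue by contradiction/descent: assume $\u=(u_x)$ and $\v=(v_x)$ are both solutions of \eqref{uR1} and show $u_x=v_x$ at every vertex by iterating a contractive estimate down the tree. Fix an arbitrary vertex $x\in G_k$. Because the non-Archimedean norm on $\ca$ satisfies $\|a+b\|\le\max\{\|a\|,\|b\|\}$, I can pass the sum over $i$ in \eqref{uR1} through a max:
\begin{equation*}
\|u_x-v_x\|\le\max_{1\le i\le M}\Bigl\|\prod_{y\in S(x)}f^{(i)}_{xy}(u_{(x,1)},\dots,u_{(x,k_x)})-\prod_{y\in S(x)}f^{(i)}_{xy}(v_{(x,1)},\dots,v_{(x,k_x)})\Bigr\|.
\end{equation*}

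Next I would invoke Lemma \ref{pr}: since each factor $f^{(i)}_{xy}(\cdot)$ lands in $C\subset\bar B(0,1)$, the product difference is dominated by the max over $y\in S(x)$ of the single-factor difference. Combining this with the contractivity assumption with constant $\beta<1$, I obtain the key one-step estimate
\begin{equation*}
\|u_x-v_x\|\le\beta\,\max_{1\le j\le k_x}\|u_{(x,j)}-v_{(x,j)}\|.
\end{equation*}
This is the heart of the proof; everything else is bookkeeping.

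Then I iterate the estimate through the subtree rooted at $x$. After $n$ applications,
\begin{equation*}
\|u_x-v_x\|\le\beta^n\,\sup\{\|u_y-v_y\|:d(x,y)=n,\ y\text{ in the subtree of }x\}.
\end{equation*}
Since both $u_y$ and $v_y$ lie in $C\subset\bar B(0,1)$, the non-Archimedean triangle inequality gives $\|u_y-v_y\|\le\max\{\|u_y\|,\|v_y\|\}\le 1$, so the supremum is bounded by $1$. Letting $n\to\infty$ and using $\beta<1$ forces $\|u_x-v_x\|=0$. As $x$ was arbitrary, $\u=\v$.

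I do not expect a real obstacle here: the non-trivial content is already packaged in Lemma \ref{pr} and in the non-Archimedean absorption of the finite sum by a max. The only mild care-point is ensuring that the iteration makes sense at every vertex of the $k$-ary (not necessarily Cayley) tree, i.e. that the max in the one-step estimate is always over a nonempty finite set $S(x)$; this is immediate from the definition \eqref{S(x)} (and at leaves, if any, the product is empty and the estimate is vacuous, so the uniqueness claim is trivially inherited from the recursion above).
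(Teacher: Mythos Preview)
Your argument is correct and follows essentially the same route as the paper's proof: derive the one-step contraction $\|u_x-v_x\|\le\beta\max_{j}\|u_{(x,j)}-v_{(x,j)}\|$ via the strong triangle inequality and Lemma~\ref{pr}, then iterate down the tree and use $\beta<1$. You are in fact slightly more explicit than the paper in justifying the uniform bound $\|u_y-v_y\|\le 1$ from $C\subset\bar B(0,1)$, which the paper leaves implicit when it writes $\|u_x-v_x\|\le\beta^{n_0}$ after iterating.
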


\begin{proof} If the equation \eqref{uR1} has not any solution, then nothing to prove.
Therefore, let us assume that the given equation has a solution. To
prove Theorem it is enough to show that any two solutions coincide
with each other. Namely, if $\u=(u_x,x\in V)$ and $\v=(v_x,x\in V)$
are solutions of \eqref{uR1}, then it is sufficient to establish
that for any $\varepsilon>0$ and $x\in V$ the inequality
$\|u_x-v_x\|<\varepsilon$ is valid.

Let $x\in V$ be an arbitrary vertex. Then from \eqref{uR1},
$\|f_{xy}^{(i)}(\xb)\|\leq 1$, $x\in C^{k_x}$ and using Lemma
\ref{pr} we obtain
\begin{eqnarray}\label{uR2}
\|u_x-v_x\|&\leq&\max_{1\leq i\leq M \atop y\in S(x)}\bigg\|
f^{(i)}_{xy}(u_{(x,1)},\dots,u_{(x,k_x)})-f^{(i)}_{xy}(v_{(x,1)},\dots,v_{(x,k_x)})
\bigg\|\nonumber\\[3mm]
&\leq&\b\bigg(\max_{1\leq i\leq
k_x}\big\|u_{(x,i)}-v_{(x,i)}\big\|\bigg).
\end{eqnarray}
Let us choose $n_0\in\bn$ such that $\b^{n_0}<\varepsilon$.
Therefore, iterating \eqref{uR2} $n_0$-times one gets
\begin{eqnarray}\label{uR3}
\|u_x-v_x\|\leq\b^{n_0}<\varepsilon.
\end{eqnarray}
This completes the proof.
\end{proof}

\begin{rem} We note that particalar cases of the present theorem
were proved in \cite{KM,M,MR1,MR2}. The proved theorem generalize
and extends all the known results.
\end{rem}

\section{Application}

In the section we consider the $p$-adic field $\bq_p$ ($p\geq 3)$.
Recall that the $p$-adic logarithm is defined by series
$$
\log_p(x)=\log_p(1+(x-1))=\sum_{n=1}^{\infty}(-1)^{n+1}\dsf{(x-1)^n}{n},
$$
which converges for every $x\in B(1,1)$. And $p$-adic exponential is
defined by
$$
\exp_p(x)=\sum_{n=1}^{\infty}\dsf{x^n}{n!},
$$
which converges for every $x\in B(0,p^{-1/(p-1)})$.

\begin{lem}\label{21} \cite{Ko} Let $x\in
B(0,p^{-1/(p-1)})$ then we have $$ |\exp_p(x)|_p=1,\ \ \
|\exp_p(x)-1|_p=|x|_p<1, \ \ |\log_p(1+x)|_p=|x|_p<p^{-1/(p-1)} $$
and $$ \log_p(\exp_p(x))=x, \ \ \exp_p(\log_p(1+x))=1+x. $$
\end{lem}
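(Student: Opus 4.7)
The plan is to estimate the $n$th term of each defining series using standard $p$-adic valuation formulas, and then apply the strong triangle inequality to isolate the leading term. Throughout, write $|x|_p=p^{-r}$ with $r>1/(p-1)$, which is the content of the hypothesis $x\in B(0,p^{-1/(p-1)})$.

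For $\exp_p(x)$, the main tool is Legendre's formula $ord_p(n!)=(n-s_p(n))/(p-1)$, where $s_p(n)$ denotes the sum of the base-$p$ digits of $n$. This yields the exact value
$$
\left|\frac{x^n}{n!}\right|_p=p^{-nr+(n-s_p(n))/(p-1)}.
$$
For every $n\geq 2$ one has $s_p(n)\geq 1$, so $(n-s_p(n))/(p-1)\leq (n-1)/(p-1)<(n-1)r$, the last inequality being strict precisely because $r>1/(p-1)$. Consequently $|x^n/n!|_p<|x|_p$ for every $n\geq 2$, and the strong triangle inequality applied to $\exp_p(x)-1=x+\sum_{n\geq 2}x^n/n!$ yields $|\exp_p(x)-1|_p=|x|_p$. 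Since this quantity is strictly less than $1$, a second application of the ultrametric inequality gives $|\exp_p(x)|_p=1$.

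For the logarithm I would use the elementary bound $ord_p(n)\leq (n-1)/(p-1)$, which follows from $p^{ord_p(n)}\leq n$ combined with Bernoulli's inequality $(1+(p-1))^k\geq 1+(p-1)k$. This gives
$$
\left|\frac{x^n}{n}\right|_p=p^{-nr+ord_p(n)}\leq p^{-nr+(n-1)/(p-1)}<p^{-r}
$$
for every $n\geq 2$, so once again the $n=1$ term dominates and $|\log_p(1+x)|_p=|x|_p$ by the strong triangle inequality. The stated bound $|x|_p<p^{-1/(p-1)}$ is the hypothesis.

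Finally, the two composition identities $\log_p(\exp_p(x))=x$ and $\exp_p(\log_p(1+x))=1+x$ are meaningful: the estimate $|\exp_p(x)-1|_p<p^{-1/(p-1)}$ places $\exp_p(x)$ in the domain of $\log_p$, and symmetrically $|\log_p(1+x)|_p<p^{-1/(p-1)}$ places $\log_p(1+x)$ in the domain of $\exp_p$. Both identities hold in the formal power series ring $\bq[[X]]$, so the remaining point is to promote the formal identities into genuine equalities of convergent sums. I would do this by showing that the associated double series converge absolutely in the $p$-adic sense on the given ball, which in an ultrametric setting is automatic from the fact that the general term tends to zero (obtained by iterating the two term-by-term estimates above). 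Once unconditional convergence is in place, the summation order can be rearranged freely and the formal algebraic identity transfers to an identity of functions. This last step is the only point of the argument with any genuine technical content.
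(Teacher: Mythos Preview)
Your argument is correct and is essentially the standard textbook derivation: Legendre's formula to control $|x^n/n!|_p$, the elementary bound $ord_p(n)\leq (n-1)/(p-1)$ for the logarithmic series, and then promoting the formal identities $\log\circ\exp=\mathrm{id}$ and $\exp\circ\log=\mathrm{id}$ via unconditional convergence of the composed series. There is nothing to compare against here, because the paper does not supply its own proof of this lemma at all: it is quoted directly from Koblitz \cite{Ko} as a known fact and used as a black box. What you have written is precisely the proof one finds there (or in Schikhof \cite{S}), so in effect you have reconstructed the cited reference rather than diverged from it.

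One small remark on presentation: in the last paragraph you say that unconditional convergence of the double series is ``the only point with any genuine technical content,'' which slightly undersells the work already done. The crucial quantitative input is the strict inequality $r>1/(p-1)$, which you used correctly to force the $n=1$ term to dominate; once the norm identities $|\exp_p(x)-1|_p=|x|_p$ and $|\log_p(1+x)|_p=|x|_p$ are in hand, the rearrangement step is routine, since in an ultrametric field any series whose general term tends to zero is unconditionally summable.
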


\begin{rem} Note that, in general, the logarithm and the exponential
functions can be defined over the field $K$ with $char(K)=0$ (see
\cite{S}).
\end{rem}

Denote
\begin{equation}\label{Exp}
\ce_p=\{x\in\bq_p: \ |x|_p=1, \ \ |x-1|_p\leq 1/p\}.
\end{equation}

Note that from Lemma \ref{21} one concludes that if $x\in\ce_p$,
then there is an element $h\in B(0,p^{-1/(p-1)})$ such that
$x=\exp_p(h)$. Therfore, for any $x,y\in\ce_p$ one gets
$xy\in\ce_p$.

{\bf 1.}  Assume that $\ca=\bq_p$ and $C=\ce_p$. Let us consider a
non-linear function:
\begin{equation}\label{c1}
f(x,y)=\frac{axy+b(x+y)+c}{a_1xy+b_1(x+y)+c_1},
\end{equation}
where $a,a_1,b,b_1,c,c_1\in \ce_p$.

\begin{prop}\label{aa} Let $f$ be given by \eqref{c1}. Then one has
\begin{itemize}
\item[(i)] $f(x,y)\in\ce_p$ for any $x,y\in\ce_p$;
\item[(ii)] the function $f$ is contractive.
\end{itemize}
\end{prop}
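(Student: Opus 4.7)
The plan is to exploit the fact that every element of $\ce_p$ is congruent to $1$ modulo $p^{-1}$ in the $p$-adic valuation, so differences of two $\ce_p$-elements have $p$-adic norm at most $1/p$, while products remain in $\ce_p$. Write $N(x,y) = axy + b(x+y) + c$ and $D(x,y) = a_1 xy + b_1(x+y) + c_1$, so that $f = N/D$.

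For (i), I first observe that $\ce_p$ is multiplicatively closed (noted in the text before the proposition), so $xy \in \ce_p$, and $|x+y-2|_p \leq 1/p$ gives $|x+y|_p = |2|_p = 1$ (using $p \geq 3$). Hence $N(x,y) \equiv a + 2b + c \equiv 4 \pmod{p^{-1}}$, and since $|4|_p = 1$ we get $|N(x,y)|_p = 1$; likewise $|D(x,y)|_p = 1$, so $|f(x,y)|_p = 1$. For the distance to $1$, I write
$$
N - D = (a - a_1)xy + (b - b_1)(x+y) + (c - c_1),
$$
whose coefficients each have norm at most $1/p$, yielding $|N - D|_p \leq 1/p$ and therefore $|f(x,y) - 1|_p = |N - D|_p/|D|_p \leq 1/p$.

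For (ii), I would expand
$$
f(x,y) - f(x',y') = \frac{N(x,y)D(x',y') - N(x',y')D(x,y)}{D(x,y)D(x',y')},
$$
whose denominator has norm $1$ by (i). The key step is to show that, after the ``diagonal'' terms $aa_1\, xyx'y'$, $bb_1(x+y)(x'+y')$, and $cc_1$ cancel pairwise, the numerator regroups as
$$
(ac_1 - a_1 c)(xy - x'y') + (bc_1 - b_1 c)\bigl((x+y) - (x'+y')\bigr) + (ab_1 - a_1 b)\bigl(xy(x'+y') - x'y'(x+y)\bigr).
$$
Each of the three scalar coefficients, being a difference of two elements of $\ce_p$, has norm at most $1/p$; each of the three ``geometric'' factors is bounded by $\delta := \max\{|x-x'|_p,|y-y'|_p\}$ via standard ultrametric identities (for instance $xy - x'y' = x(y-y') + y'(x-x')$ and $xy(x'+y') - x'y'(x+y) = xx'(y-y') + yy'(x-x')$). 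The ultrametric inequality then yields $|f(x,y) - f(x',y')|_p \leq (1/p)\delta$, so $f$ is contractive with constant $\alpha_f = 1/p$.

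The main obstacle is the algebraic bookkeeping in part (ii): one must carefully expand $ND' - N'D$ and recognize that after the three diagonal cancellations the remaining six terms assemble into exactly three products, each pairing a ``small'' scalar coefficient from an $\ce_p$-difference with a ``small'' geometric factor coming from $x-x'$ and $y-y'$. Once this structure is exposed, both the stability $f(\ce_p^2) \subset \ce_p$ and the contraction constant $1/p$ follow at once from the ultrametric property.
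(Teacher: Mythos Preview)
Your proposal is correct and follows essentially the same route as the paper. For (i) both arguments reduce $N$ and $D$ to $4$ modulo $p$ and then bound $|N-D|_p$; for (ii) the paper writes the numerator of $f(x,y)-f(x_1,y_1)$ directly as $\Delta_1(x-x_1)+\Delta_2(y-y_1)$ with $|\Delta_i|_p\le 1/p$, which is just a regrouping of your three-term expansion (indeed $\Delta_1(x-x_1)+\Delta_2(y-y_1)$ expands to exactly your $(ab_1-a_1b)$, $(ac_1-a_1c)$, $(bc_1-b_1c)$ terms), so the two presentations are algebraically equivalent.
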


\begin{proof} (i). Take any $x,y\in\ce_p$. Then one can see that
\begin{eqnarray}\label{cc}
|axy+b(x+y)+c|_p=|axy-1+b(x-1+y-1)+2(b-1)+c-1+4|_p=1,
\end{eqnarray}
since $a,a_1,b,b_1,c,c_1\in \ce_p$. Similarly, we get
\begin{eqnarray}\label{cc1}
|a_1 xy+b_(x+y)+c_1|_p=1.
\end{eqnarray}
Therefore, $|f(x,y)|_p=1$.
Using the same manner from
$$
|f(x,y)-1|_p=|(a-a_1)xy+(b-b_1)(x+y)+c-c_1|_p\leq \frac{1}{p}
$$
we find that $f(x,y)\in\ce_p$.

(ii) Now take any $(x,y),(x_1,y_1)\in \ce_p\times\ce_p$. Then using
\eqref{cc},\eqref{cc1} one finds
\begin{eqnarray}\label{cc2}
|f(x,y)-f(x_1,y_1)|_p=|\D_1(x-x_1)+\D_2(y-y_1)|_p
\end{eqnarray}
where
\begin{eqnarray*}\label{cc3}
\D_1&=&(ab_1-a_1b)yy_1+(ac_1-a_1c)y_1+c_1b-cb_1,\\[2mm]
\D_2&=&(ab_1-a_1b)xx_1+(ac_1-a_1c)x+c_1b-cb_1.
\end{eqnarray*}
It is easy to see that $|\D_1|_p\leq 1/p$, $|\D_2|_p\leq 1/p$.
Hence, from \eqref{cc2} we have
\begin{eqnarray}\label{cc2}
|f(x,y)-f(x_1,y_1)|_p\leq \frac{1}{p}\max\{|x-x_1|_p,|y-y_1|_p\}
\end{eqnarray}
which implies the assertion.
\end{proof}

Let $G$ be a Cayley tree of order three, and consider the following
functional equation
\begin{eqnarray}\label{uR2}
u_{x}=f(u_{(x,1)},u_{(x,2)}), \ \ \
\end{eqnarray}
where $\u=((u_{x})_{x\in G}$ is unknown function and $f$ is given by
\eqref{c1}.

Then due to Theorem \ref{Rt2} the equation has  a unique solution
$u_x=u_*$. Here $u_*$ is a fixed point belonging to $\ce_p$ of the
function $f(u,u)$ which exists due to Theorem \ref{Rt1}. This
fact extends the results of the papers \cite{Khak}.\\

One can consider the following equation
\begin{eqnarray}\label{uR2}
u_{x}=\big(f(u_{(x,1)},u_{(x,2)})\big)^k.
\end{eqnarray}
This equation also has a unique solution $u_x=u_*$, where
$u_*\in\ce_p$ is a fixed point of $(f(u,u))^k$ which exists due to
Theorem \ref{Rt1}. This fact implies the main result of the paper \cite{KM}.\\

{\bf 2.} Now let us consider another kind of example.

Assume that $\ca=\bq_p$ and $C=S(0,1)$. Define a non-linear function
as follows:
\begin{equation}\label{a1}
F(x_1,\dots,x_m)=\frac{P(x_1,\dots,x_m)+C}{Q(x_1,\dots,x_m)+C_1}
\end{equation}
where
\begin{eqnarray}\label{a2}
&&P(x_1,\dots,x_m)=\sum_{i_1+\cdots +i_m=1,\atop i_k\geq 0,1\leq
k\leq m}^N
A_{i_1,\dots,i_m}x_1^{i_1}\cdots x_m^{i_m}\\[3mm]
\label{a3} &&Q(x_1,\dots,x_m)=\sum_{i_1+\cdots +i_m=1,\atop i_k\geq
0,1\leq k\leq m}^N B_{i_1,\dots,i_m}x_1^{i_1}\cdots x_m^{i_m}
\end{eqnarray}
and $A_{i_1,\dots,i_m},B_{i_1,\dots,i_m}\in B(0,1)$ and
$|C|_p=|C_1|_p=1$.

\begin{prop} Let $F$ be given by \eqref{a1}. Then one has
\begin{itemize}
\item[(i)] $F(x_1,\dots,x_m)\in S(0,1)$ for any $x_1,\dots,x_m\in S(0,1)$;
\item[(ii)] the function $F$ is contractive on $S(0,1)^m$.
\end{itemize}
\end{prop}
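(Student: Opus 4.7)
The plan is to reduce both assertions to the strong triangle inequality together with the product-difference estimate in Lemma \ref{pr}. I first record the arithmetic of the monomials. For any $\xb=(x_1,\dots,x_m)\in S(0,1)^m$ one has $|x_1^{i_1}\cdots x_m^{i_m}|_p=1$, and since $A_{i_1,\dots,i_m}\in B(0,1)$, i.e.\ $|A_{i_1,\dots,i_m}|_p<1$ (so in $\bq_p$ we have $|A_{i_1,\dots,i_m}|_p\leq 1/p$), each monomial of $P$ and $Q$ has $p$-adic norm at most $1/p$. Since the sums defining $P$ and $Q$ contain no constant term, the strong triangle inequality gives
\[
|P(\xb)|_p\leq \frac{1}{p},\qquad |Q(\xb)|_p\leq \frac{1}{p}.
\]
Combined with $|C|_p=|C_1|_p=1$, one more application of the strong triangle inequality yields $|P(\xb)+C|_p=1$ and $|Q(\xb)+C_1|_p=1$, hence $|F(\xb)|_p=1$. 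This proves (i).

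For (ii), I would bring $F(\xb)-F(\yb)$ to a common denominator. Since the denominator $(Q(\xb)+C_1)(Q(\yb)+C_1)$ has $p$-adic norm $1$ by part (i), everything reduces to controlling the numerator
\[
N(\xb,\yb)=(P(\xb)+C)(Q(\yb)+C_1)-(P(\yb)+C)(Q(\xb)+C_1).
\]
Expanding and cancelling the $CC_1$ terms gives
\[
N(\xb,\yb)=\bigl(P(\xb)Q(\yb)-P(\yb)Q(\xb)\bigr)+C_1\bigl(P(\xb)-P(\yb)\bigr)+C\bigl(Q(\yb)-Q(\xb)\bigr),
\]
and I would further split $P(\xb)Q(\yb)-P(\yb)Q(\xb)=Q(\yb)(P(\xb)-P(\yb))+P(\yb)(Q(\yb)-Q(\xb))$.

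To estimate $|P(\xb)-P(\yb)|_p$ I apply Lemma \ref{pr} monomial by monomial: for each index $(i_1,\dots,i_m)$, the product estimate gives $|x_1^{i_1}\cdots x_m^{i_m}-y_1^{i_1}\cdots y_m^{i_m}|_p\leq\max_k|x_k-y_k|_p$, and multiplying by $A_{i_1,\dots,i_m}$ (norm $\leq 1/p$) and taking the maximum over the finite index set yields
\[
|P(\xb)-P(\yb)|_p\leq \frac{1}{p}\max_{1\leq k\leq m}|x_k-y_k|_p,
\]
and analogously for $Q$. Plugging these bounds back, together with $|Q(\yb)|_p,|P(\yb)|_p\leq 1/p$ and $|C|_p=|C_1|_p=1$, the strong triangle inequality gives $|N(\xb,\yb)|_p\leq \tfrac{1}{p}\max_k|x_k-y_k|_p$, so
\[
|F(\xb)-F(\yb)|_p\leq \frac{1}{p}\max_{1\leq k\leq m}|x_k-y_k|_p,
\]
establishing contractivity with constant $\alpha_F=1/p<1$.

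The main obstacle is simply bookkeeping, namely making sure the hypothesis $A_{i_1,\dots,i_m}\in B(0,1)$ is read as \emph{strict} inequality $|A_{i_1,\dots,i_m}|_p<1$ (consistent with the paper's notation for the open ball), because this strictness is what drives both the equality $|P(\xb)+C|_p=|C|_p=1$ in (i) and the quantitative factor $1/p$ in (ii). Once that convention is fixed the estimate is a direct combination of the strong triangle inequality and Lemma \ref{pr}.
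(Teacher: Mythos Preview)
Your argument is correct and follows essentially the same route as the paper's: both parts reduce to the strong triangle inequality together with Lemma~\ref{pr} applied to monomial differences, the factor $1/p$ coming from $A_{i_1,\dots,i_m},B_{i_1,\dots,i_m}\in B(0,1)$. The only cosmetic difference is that the paper expands the cross term $P(\xb)Q(\yb)-P(\yb)Q(\xb)$ into a double sum over monomials before invoking Lemma~\ref{pr}, whereas you first factor it as $Q(\yb)\bigl(P(\xb)-P(\yb)\bigr)+P(\yb)\bigl(Q(\yb)-Q(\xb)\bigr)$; both routes give the same bound.
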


\begin{proof} (i). Due to $A_{i_1,\dots,i_m},B_{i_1,\dots,i_m}\in B(0,1)$ we immediately
find that $|P(x_1,\dots,x_m)|_p=|Q(x_1,\dots,x_m)|_p<1$ for any
$x_1,\dots,x_m\in S(0,1)$, which with $|C|_p=|C_1|_p=1$ implies the
assertion.

(ii) Now take any $(x_1,\dots,x_m),(y_1,\dots,y_m)\in S(0,1)^m$.
Then using \eqref{a2} and Proposition \ref{pr} one finds
\begin{eqnarray*}\label{cc2}
&&|F(x_1,\dots,x_m)-F(y_1,\dots,y_m)|_p=|C_1P(x_1,\dots,x_m)+CQ(y_1,\dots,y_m)\\[2mm]
&&\qquad \qquad +P(x_1,\dots,x_m)Q(y_1,\dots,y_m)-
C_1P(y_1,\dots,y_m)\\[2mm]
&&\qquad \qquad -CQ(x_1,\dots,x_m)- P(y_1,\dots,y_m)Q(x_1,\dots,x_m)|_p\\[2mm]
&&\qquad \qquad =\bigg|C_1\sum_{i_1,\dots,i_m}
A_{i_1,\dots,i_m}\big(x_1^{i_1}\cdots x_m^{i_m}-y_1^{i_1}\cdots
y_m^{i_m}\big)\\[3mm]
&&\qquad \qquad -C\sum_{i_1,\dots,i_m}
B_{i_1,\dots,i_m}\big(x_1^{i_1}\cdots x_m^{i_m}-y_1^{i_1}\cdots
y_m^{i_m}\big)\\[3mm]
&&+\sum_{i_1,\dots,i_m}\sum_{j_1,\dots,j_m}
A_{i_1,\dots,i_m}B_{j_1,\dots,j_m}\big(x_1^{i_1}\cdots
x_m^{i_m}y_1^{j_1}\cdots y_m^{j_m}-x_1^{j_1}\cdots
x_m^{j_m}y_1^{i_1}\cdots y_m^{i_m}\big)\bigg|_p\\[3mm]
&&\qquad \qquad \leq\frac{1}{p}\max\{|x_k-y_k|_p\},
\end{eqnarray*}
which implies the assertion.
\end{proof}

Let us consider the following sequence
$$
X_{n+2m}=F(X_n,\dots,X_{n+m})F(X_{n+1},\dots,X_{n+m})F(X_{n+m},\dots,X_{n+2m-1}),
$$
with initial condtions $X_1,\dots,X_{2m}\in S(0,1)$.

Then due to Theorem \ref{Rt1} the sequence $\{X_n\}$ converges to
$X_*\in S(0,1)$ which is a solution of the equation
$$
X=\big(F(X,\dots,X)\big)^3.
$$

{\bf 3.} Assume that $\ca=\bq_p^m$ and $C=\ce_p$, here $m+1$ is not
divisible by $p$. Define a non-linear mapping $\fb:\bq_p^m\to
\bq_p^m$ by the following formula:
\begin{equation}\label{b1}
\fb(\xb)_k=\frac{\sum_{j=1}^ma^{(k)}_jx_j+a_k}{\sum_{j=1}^mb^{(k)}_jx_j+b_k},
\ \ \xb=(x_1,\dots,x_m), \ k=1,\dots,m,
\end{equation}
where $a^{(k)}_j,b^{(k)}_j,a_k,b_k\in\ce_p$.

\begin{prop} Let $\fb$ be given by \eqref{b1}. Then one has
\begin{itemize}
\item[(i)] $\fb(\ce_p^m)\subset \ce_p^m$;
\item[(ii)] the mapping $\fb$ is contractive on $\ce_p^m$.
\end{itemize}
\end{prop}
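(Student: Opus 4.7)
The plan is to handle each coordinate $\fb(\xb)_k$ separately. Write
\[
N_k(\xb)=\sum_{j=1}^m a^{(k)}_j x_j+a_k, \qquad D_k(\xb)=\sum_{j=1}^m b^{(k)}_j x_j+b_k,
\]
so that $\fb(\xb)_k=N_k(\xb)/D_k(\xb)$. Two simple facts underlie the argument: first, $\ce_p$ is closed under products (which follows from Lemma \ref{21} via the representation $\ce_p=\exp_p(B(0,p^{-1/(p-1)}))$), and second, if $u,v\in\ce_p$ then $|u-v|_p\le 1/p$, since both $u-1$ and $v-1$ lie in $B(0,1/p)$.

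For (i), each summand $a^{(k)}_j x_j$ and the constant $a_k$ lies in $\ce_p$, so equals $1+\eta$ with $|\eta|_p\le 1/p$; thus $N_k(\xb)=(m+1)+\eta_k$ with $|\eta_k|_p\le 1/p$, and the hypothesis $p\nmid m+1$ gives $|m+1|_p=1$, whence $|N_k(\xb)|_p=1$. The same argument gives $|D_k(\xb)|_p=1$, and therefore $|\fb(\xb)_k|_p=1$. For the second defining property of $\ce_p$ I would write
\[
\fb(\xb)_k-1=\frac{N_k(\xb)-D_k(\xb)}{D_k(\xb)}=\frac{\sum_{j=1}^m(a^{(k)}_j-b^{(k)}_j)x_j+(a_k-b_k)}{D_k(\xb)},
\]
and use $|a^{(k)}_j-b^{(k)}_j|_p\le 1/p$, $|a_k-b_k|_p\le 1/p$, $|x_j|_p=1$ and $|D_k(\xb)|_p=1$, together with the ultrametric inequality, to conclude $|\fb(\xb)_k-1|_p\le 1/p$.

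For (ii), the key identity is
\[
N_k(\xb)D_k(\yb)-N_k(\yb)D_k(\xb)=\sum_{j=1}^m\bigl(D_k(\xb)a^{(k)}_j-N_k(\xb)b^{(k)}_j\bigr)(x_j-y_j),
\]
obtained by adding and subtracting $N_k(\xb)D_k(\xb)$. Since $|D_k(\xb)D_k(\yb)|_p=1$, contractivity reduces to showing that each coefficient $D_k(\xb)a^{(k)}_j-N_k(\xb)b^{(k)}_j$ has $p$-adic norm at most $1/p$. This is the main obstacle, because $N_k(\xb)$ and $D_k(\xb)$ sit near $m+1$ rather than near $1$, so neither is in $\ce_p$ and one cannot simply invoke the second fact above. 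The route I will take is to expand the coefficient directly as
\[
\sum_{i=1}^m\bigl(b^{(k)}_i a^{(k)}_j-a^{(k)}_i b^{(k)}_j\bigr)x_i+\bigl(b_k a^{(k)}_j-a_k b^{(k)}_j\bigr),
\]
observe that the diagonal $i=j$ term vanishes identically, and note that each remaining parenthesised expression is a difference of two products of $\ce_p$-elements and hence of norm at most $1/p$. Combined with $|x_i|_p=1$, the ultrametric inequality gives the required bound on the coefficient, yielding $\|\fb(\xb)-\fb(\yb)\|\le \tfrac{1}{p}\|\xb-\yb\|$ and thus contractivity with constant $1/p$.
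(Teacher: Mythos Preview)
Your proof is correct and follows essentially the same approach as the paper: part (i) is handled identically via the $(m+1)$ decomposition, and in part (ii) both you and the paper reduce the contraction estimate to bounding expressions of the form $b^{(k)}_i a^{(k)}_j-a^{(k)}_i b^{(k)}_j$ and $b_k a^{(k)}_j-a_k b^{(k)}_j$ by $1/p$ using that products of $\ce_p$-elements lie in $\ce_p$. The only cosmetic difference is bookkeeping: you add and subtract $N_k(\xb)D_k(\xb)$ to factor out $(x_j-y_j)$ immediately, whereas the paper first expands $N_k(\xb)D_k(\yb)-N_k(\yb)D_k(\xb)$ into a bilinear part $I=\sum_{i,j}a^{(k)}_i b^{(k)}_j(x_iy_j-x_jy_i)$ and a linear part $II$, and then rewrites $I$ via antisymmetry to reach the same coefficients.
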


\begin{proof} (i) Due to $a^{(k)}_j,b^{(k)}_j,a_k,b_k\in\ce_p$ and
$m+1\nmid p$ one finds
\begin{eqnarray}\label{b2}
&& \bigg|\sum_{j=1}^ma^{(k)}_jx_j+a_k\bigg|_p=
\bigg|\sum_{j=1}^m(a^{(k)}_jx_j-1)+(a_k-1)+m+1\bigg|_p=1
\end{eqnarray}
Similarly, we have
\begin{eqnarray}\label{b3}
&& \bigg|\sum_{j=1}^mb^{(k)}_jx_i+b_k\bigg|_p=1.
\end{eqnarray}
This yields that $|\fb(\xb)_k|_p=1$ for all $\xb\in\ce_p^m$,
$k\in\{1,\dots,m\}$.

Using the same argument, one can get $|\fb(\xb)_k-1|_p\leq 1/p$ for
all $\xb\in\ce_p^m$. This implies the assertion.

(ii). Now using \eqref{b2},\eqref{b3} we obtain
\begin{eqnarray}\label{b4}
|\fb(\xb)_k-\fb(\yb)_k|_p&=&
\bigg|\bigg(\sum_{j=1}^ma^{(k)}_jx_j+a_k\bigg)\bigg(\sum_{j=1}^mb^{(k)}_jy_j+b_k\bigg)\nonumber\\[3mm]
&&-\bigg(\sum_{j=1}^ma^{(k)}_jy_i+a_k\bigg)\bigg(\sum_{j=1}^mb^{(k)}_jx_j+b_k\bigg)\bigg|_p\nonumber\\[3mm]
&=&\bigg|\underbrace{\sum_{i,j=1}^ma^{(k)}_ib^{(k)}_j(x_iy_j-x_jy_i)}_I
-\underbrace{\sum_{j=1}^m(a_kb^{(k)}_j-b_ka^{(k)}_j)(x_j-y_j)}_{II}\bigg|_p.
\end{eqnarray}
Now let us rewite the expression $I$ as follows
\begin{eqnarray*}
\sum_{i,j=1}^ma^{(k)}_ib^{(k)}_j(x_iy_j-x_jy_i)&=&
\sum_{i,j=1}^ma^{(k)}_ib^{(k)}_j\big(x_i(y_j-x_j)+x_j(x_i-y_i)\big)\\[3mm]
&=&\sum_{i,j=1}^mx_i(b^{(k)}_ia^{(k)}_j-a^{(k)}_ib^{(k)}_j)\big(x_j-y_j)
\end{eqnarray*}
Therefore, we find that
$$
|I|_p\leq\frac{1}{p}\max\{|x_k-y_k|_p\}, \ \
|II|_p\leq\frac{1}{p}\max\{|x_k-y_k|_p\},
$$
Hence, the last inequllitions with \eqref{b4} implies that
$$
\|\fb(\xb)-\fb(\yb)\|\leq\frac1p\|\xb-\yb\|
$$
this proves the proposition.
\end{proof}

Let $\G_k$ be a Cayley tree  of order $k$ ($k\geq 1$). Let us
consider the functional equation
\begin{eqnarray}\label{b5}
\u_{x}=\prod_{y\in S(x)}\fb(\u_{y}), \ \ \
\end{eqnarray}
where $\u=((\u_{x})_{x\in \G_k}$ is unknown function and $\fb$ is
given by \eqref{b1}.

It is clear that the equation \eqref{b5} has a solution $\u_x=\u_*$,
where $\u_*$ is fixed point of the equation
$$
\big(\fb(\u)\big)^k=\u.
$$
Note that this solution $\u_*$ belongs to $\ce_p^m$ which follows
from Theorem \ref{Rt1}.

Now according to Theorem \ref{Rt2} we conclude that the equation
\eqref{b5} has only one solution which is  $\u_x=\u_*$. This result
can be applied to the existence and uniqueness of $p$-adic Gibbs
measure associated with $m$-state $p$-adic $\l$-model on the Cayley
tree of order $k$ (see for the definition of the model
\cite{M00}).\\

{\bf 4.} In this example, we assume that $\ca=c_0$ and $C=\bar
B(0,1)$. Define a non-linar mapping $\cf:c_0\to c_0$ as follows:
\begin{equation}\label{f1}
(\cf(\xb))_k=\l_k F_k(\xb), \ \ \xb\in c_0,
\end{equation}
where $\ell=\{\l_k\}\in c_0$ with $\|\ell\|\leq 1$, and
\begin{equation}\label{f2}
F_k(\xb)=\frac{ax_k+f_k(\xb)}{b+f_k(\xb)}.
\end{equation}
Here $a,b\in\bq_p$, $\max\{|a|_p,|b|_p\}<1$ and the functions
$\{f_k\}$ such that $|f_k(\xb)|_p=1$ for all $\xb\in\bar B(0,1)$,
$k\in\bn$ and one has
\begin{equation}\label{f3}
|f_k(\xb)-f_k(\yb)|_p\leq\|\xb-\yb\|, \ \ \textrm{for all} \ \
\xb,\yb\in \bar B(0,1).
\end{equation}

\begin{prop} Let $\cf$ be given by \eqref{f1}. Then one has
\begin{itemize}
\item[(i)] $\cf(\bar B(0,1))\subset \bar B(0,1)$;
\item[(ii)] the mapping $\cf$ is contractive on $\bar B(0,1)$.
\end{itemize}
\end{prop}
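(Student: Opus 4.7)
The plan is to prove both parts by direct computation, using the strong (non-Archimedean) triangle inequality to pin down the $p$-adic norms of the numerator and denominator of $F_k$.

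For part (i), I would first note that for any $\xb \in \bar B(0,1)$, the assumption $\max\{|a|_p,|b|_p\}<1$ together with $|x_k|_p \leq 1$ gives $|ax_k|_p < 1$ and $|b|_p < 1$, while $|f_k(\xb)|_p = 1$. The non-Archimedean triangle inequality, applied in its sharpened form (the maximum is attained when the summands have distinct norms), then yields
\[
|ax_k + f_k(\xb)|_p = 1, \qquad |b+f_k(\xb)|_p = 1,
\]
so $|F_k(\xb)|_p = 1$. Consequently $|(\cf(\xb))_k|_p = |\l_k|_p$. Since $\ell \in c_0$, the sequence $(\cf(\xb))_k$ tends to $0$, so $\cf(\xb) \in c_0$, and $\|\cf(\xb)\| = \|\ell\| \leq 1$.

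For part (ii), I would take $\xb,\yb \in \bar B(0,1)$ and combine the two fractions defining $F_k(\xb)-F_k(\yb)$ over the common denominator $(b+f_k(\xb))(b+f_k(\yb))$, which by part (i) has $p$-adic norm $1$. The key algebraic step is to reorganize the numerator so that the quantities $x_k-y_k$ and $f_k(\xb)-f_k(\yb)$ appear explicitly; expanding and adding/subtracting $ax_k f_k(\xb)$ inside leads, after simplification, to
\[
a(b+f_k(\xb))(x_k-y_k) \;-\; (b-ax_k)\bigl(f_k(\yb)-f_k(\xb)\bigr).
\]
Setting $\delta = \max\{|a|_p,|b|_p\} < 1$, the first term has norm at most $|a|_p|x_k-y_k|_p \leq \delta\|\xb-\yb\|$, and since $|b-ax_k|_p \leq \delta$, condition \eqref{f3} makes the second term at most $\delta \|\xb-\yb\|$ as well. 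Thus $|F_k(\xb)-F_k(\yb)|_p \leq \delta \|\xb-\yb\|$, and multiplying by $|\l_k|_p \leq 1$ and taking the sup over $k$ gives $\|\cf(\xb)-\cf(\yb)\| \leq \delta\|\xb-\yb\|$, proving contractivity with constant $\delta$.

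The main obstacle is the bookkeeping in the numerator: one must express $F_k(\xb)-F_k(\yb)$ in a form where the coefficients of $x_k-y_k$ and $f_k(\xb)-f_k(\yb)$ are each small (norm $\leq \delta$). Everything else is a routine application of the non-Archimedean triangle inequality and the hypotheses on $a$, $b$, $\ell$, and $f_k$.
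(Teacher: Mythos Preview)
Your proposal is correct and follows essentially the same route as the paper: both parts use the sharpened non-Archimedean triangle inequality to see that $|F_k(\xb)|_p=1$, and for (ii) the paper performs the identical algebraic reorganization of the numerator into $a(b+f_k(\xb))(x_k-y_k)+(b-ax_k)(f_k(\xb)-f_k(\yb))$ and bounds each term by $\max\{|a|_p,|b|_p\}\|\xb-\yb\|$. Your write-up is in fact slightly more careful in spelling out why $\cf(\xb)\in c_0$ and why $|\l_k|_p\le 1$, but otherwise the arguments coincide.
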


\begin{proof} (i) From $\max\{|a|_p,|b|_p\}<1$ and $|f_k(\xb)|_p=1$ for all $\xb\in\bar B(0,1)$
we immediately find that $|F_k(\xb)|_p=1$ for all $\xb\in\bar
B(0,1)$ and $k\in\bn$. Therefore, $\|\cf(\xb)\|=\|\ell\|\leq 1$,
which is the required assertion.

(ii) Take any $\xb,\yb\in \bar B(0,1)$. Then we have
\begin{eqnarray}\label{f4}
|F_k(\xb)-F_k(\yb)|&=&|ab(x_k-y_k)+a(x_kf(\yb)-y_kf_k(\xb))+b(f_k(\xb)-f_k(\yb))\|_p\nonumber
\\[2mm]
&=&\bigg|a\big(b+f_k(\xb)\big)(x_k-y_k)+(b-ax_k)(f_k(\xb)-f_k(\yb))\bigg|_p\nonumber
\\[2mm]
&\leq &\max\{|a|_p,|b|_p\}\|\xb-\yb\|.
\end{eqnarray}
Hence, from \eqref{f1} and \eqref{f4} one gets
$$
\|\cf(\xb)-\cf(\yb)\|\leq\max\{|a|_p,|b|_p\}\|\xb-\yb\|
$$
this completes the proof.
\end{proof}

Let $\G_k$ be a Cayley tree  of order $k$ ($k\geq 1$). Let us
consider the functional equation
\begin{eqnarray}\label{f7}
\u_{x,i}=\prod_{y\in S(x)}(\cf(\u_{y}))_i, \ \ \textrm{for all} \ \
i\in\bn
\end{eqnarray}
where $\u_x=\{\u_{x,k}\}\in C$ for each $x\in\G_k$ is unknown
function and $\cf$ is given by \eqref{f1}.
 Since $c_0$ is an algebra, then
\eqref{f5} can be rewritten as follows
\begin{eqnarray}\label{f6}
\u_{x}=\prod_{y\in S(x)}\cf(\u_{y}).
\end{eqnarray}

According to Theorem \ref{Rt2} we conclude that the equation
\eqref{f6} has only one solution which is  $\u_x=\u_*$. Here $\u_*$
is a solution of the equation
$$
\big(\cf(\u)\big)^k=\u.
$$
Note that this solution $\u_*$ belongs to $C$ which follows from
Theorem \ref{Rt1}.

From this result, as a particular case, we obtain a main result of
the paper \cite{KM2009}, if one takes
$$
f_k(\xb)=p\sum_{j=1}^\infty x_j+1 \ \ \textrm{for all} \ \ k\in\bn,
$$
and $a=p(\theta-1)$, $b=\theta-1$, where $\theta\in\ce_p$.\\

Let $N\geq 2$ be a fixed natural number. Now consider another kind
of the functional equation
\begin{eqnarray}\label{f7}
\u_{x,i}=\prod_{y\in S(x)}\prod_{j=1}^N(\cf(\u_{y}))_{i+j}, \ \
\textrm{for all} \ \ i\in\bn
\end{eqnarray}
where as before $\u_x=\{\u_{x,k}\}\in C$, for each $x\in\G_k$, is
unknown function and $\cf$ is given by \eqref{f1}.

Let us rewrite the last equation in terms of elements of the algebra
$c_0$. Denote by $\sigma:c_0\to c_0$ the shift operator, i.e.
$$
(\s(\xb))_k=x_{k+1}, \ \ \ k\in\bn
$$
where $\xb=\{x_k\}\in c_0$. Then \eqref{f7} can be rewritten as
follows
\begin{eqnarray}\label{f8}
\u_{x}=\prod_{y\in S(x)}\prod_{j=1}^N\s^j(\cf(\u_{y})).
\end{eqnarray}

Again Theorem \ref{Rt2} implies the uniqueness of the solution of
\eqref{f6}, which is $\u_x=\u_*$ for all $x\in\G_k$. Here $\u_*$ is
a solution of the equation
$$
\bigg(\prod_{j=1}^N\s^j(\cf(\u))\bigg)^k=\u.
$$
Note that this solution $\u_*$ belongs to $C$ which follows from
Theorem \ref{Rt1}.

\section*{Acknowledgments}
The first named author (F.M.) acknowledges the Scientific and
Technological Research Council of Turkey (TUBITAK) for support, and
Zirve University (Gazinatep) for kind hospitality. F.M. also thanks
the MOHE grant ERGS13-024-0057, the IIUM grant EDW B13-029-0914 and
the Junior Associate scheme of the Abdus Salam International Centre
for Theoretical Physics, Trieste, Italy.

\end{document}